 \crefname{theorem}{Theorem}{Theorems}
 \crefname{thm}{Theorem}{Theorems}
 \crefname{lemma}{Lemma}{Lemmas}
 \crefname{lem}{Lemma}{Lemmas}
 \crefname{remark}{Remark}{Remarks}
 \crefname{proposition}{Proposition}{Propositions}
 \crefname{defn}{Definition}{Definitions}
 \crefname{corollary}{Corollary}{Corollaries}
 \crefname{section}{Section}{Sections}
 \crefname{figure}{Figure}{Figures}
\newtheorem{thm}{Theorem}
\newtheorem{theorem}[thm]{Theorem}
\newtheorem{corollary}[thm]{Corollary}
\theoremstyle{remark}
\newtheorem{remark}[thm]{Remark}
\numberwithin{equation}{section}
\newcommand{\Z}{\mathbb{Z}}
\newcommand{\E}{\mathbb{E}}
\newcommand{\eqd}{\overset{d}{=}}
\title[Random Walk Collisions in Reversible Random Graphs]{Collisions of Random Walks\\ in Reversible Random Graphs}
\author{Tom Hutchcroft}
\address{University of British Columbia}
\email{thutch@math.ubc.ca}
\author{Yuval Peres}
\address{Microsoft Research}
\email{peres@microsoft.com}
\begin{document}
\begin{abstract}
We  prove that in any recurrent reversible random rooted graph, two independent simple random walks started at the same vertex collide infinitely often almost surely. This applies to the Uniform Infinite Planar Triangulation and Quadrangulation
and to the Incipient Infinite Cluster in $\Z^2$.
\end{abstract}

\maketitle
\section{Introduction}
Let $G$ be an infinite, connected, locally finite graph. $G$ is said to have the \textbf{infinite collision property} if for every vertex $v$ of $G$, two independent random walks $\langle X_n \rangle_{n\geq0}$ and $\langle Y_n \rangle_{n\geq 0}$ started from $v$ collide (i.e. occupy the same vertex at the same time) infinitely often almost surely (a.s.). 
Although transitive recurrent graphs such as $\mathbb{Z}$ and $\mathbb{Z}^2$ are easily seen to have the infinite collision property, Krishnapur and Peres  \cite{KrPe04} showed that the infinite collision property does not hold for the \emph{comb graph}, a subgraph of $\Z^2$.



Chen and Chen \cite{ChenChen10} proved that the infinite cluster of supercritical Bernoulli bond percolation in $\Z^2$ a.s.~ has the infinite collision property. Barlow, Peres and Sousi \cite{BaPeSo12} gave a sufficient condition for the infinite collision property in terms of the Green function. They deduced that several classical random recurrent graphs have the infinite collision property, including the incipient infinite percolation cluster in dimensions $d\geq 19$.
However, 
 the methods of \cite{ChenChen10} and \cite{BaPeSo12} both require precise estimates on the graphs under consideration,  
 and the infinite collision property was still not known to hold for several important random recurrent graphs -- see \cref{C:examples}.


In this note we prove that the infinite collision property holds a.s.~ for a large class of random recurrent graphs.
Recall that a \textbf{rooted graph} $(G,\rho)$ is a graph $G$ together with a distinguished root vertex $\rho$, and that a random rooted graph $(G,\rho)$ is said to be \textbf{reversible} if $(G,\rho,X_1)$ and $(G,X_1,\rho)$ have the same distribution, where $X_1$ is the first step of a simple random walk on $G$ started at $\rho$ (see \cref{S:Definitions} for more details).
\begin{theorem}\label{T:Main} Let $(G,\rho)$ be a recurrent reversible random rooted graph. Then $G$ has the infinite collision property almost surely.
\end{theorem}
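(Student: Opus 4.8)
The plan is to reduce \cref{T:Main} to a single assertion about the random root and then establish that assertion by combining a reversibility-based computation with a zero-one law. Write $\mu$ for the law of $(G,\rho)$. I claim it suffices to show that, for $\mu$-a.e.\ rooted graph, two independent walks from $\rho$ collide at \emph{some} positive time almost surely. Two soft reductions are involved. First, to pass from the root to \emph{every} vertex: if a property holds at $\rho$ for $\mu$-a.e.\ rooted graph, then since $(G,X_n)$ has law $\mu$ for every $n$ by stationarity, the property holds at $X_n$ for each $n$ almost surely, and recurrence forces the walk to visit every vertex, so the property holds at all vertices almost surely. Second, to pass from "one collision" to "infinitely many": I would apply the strong Markov property at the first collision time $\tau$, at the meeting vertex $W$. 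The futures of the two walks from $W$ are two fresh independent walks, and the environment $(G,W)$ seen at the (almost surely finite) time $\tau$ is absolutely continuous with respect to $\mu$; hence the "collide at least once" property propagates, so each collision is almost surely followed by another and there are infinitely many.

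To produce collisions I would exploit reversibility through the bi-infinite walk. Extend the stationary walk to a two-sided sequence $(X_n)_{n\in\Z}$ with $X_0=\rho$; reversibility makes $(X_{-n})_{n\ge0}$ a simple random walk which, conditionally on $X_0=\rho$, is independent of $(X_n)_{n\ge 0}$ and identically distributed. Thus two independent walks from $\rho$ are exactly the two halves of one bi-infinite walk, and a collision at time $n$ is the event $\{X_n=X_{-n}\}$. Conditionally on the walk this event has probability $\sum_w p_n(\rho,w)^2$, while shift-stationarity (shifting by $n$) gives $\Pr(X_n=X_{-n})=\Pr(X_0=X_{2n})$. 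Averaging over $\mu$, the expected number of collisions is
\[
\sum_{n\ge 1}\E_\mu\Big[\textstyle\sum_w p_n(\rho,w)^2\Big]=\sum_{n\ge1}\E_\mu\big[p_{2n}(\rho,\rho)\big]=\E_\mu\Big[\sum_{n\ge1}p_{2n}(\rho,\rho)\Big],
\]
which is infinite since recurrence yields $\sum_n p_{2n}(\rho,\rho)=\infty$ almost surely. In particular the walks collide at a positive time with positive probability, i.e.\ $\beta(G,\rho):=\Pr(\text{collision at a positive time}\mid G,\rho)$ satisfies $\E_\mu[\beta]>0$.

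The hard part will be upgrading "$\E_\mu[\beta]>0$" to "$\beta=1$ for $\mu$-a.e.\ rooted graph". This step must use reversibility of $\mu$ globally rather than conditionally on a fixed graph: the comb graph is a fixed recurrent graph on which the infinite collision property fails even though the analogue of the Green-function sum diverges, so any argument that fixes $G$ and studies the pair $(X_n,Y_n)$ as a chain on $G\times G$ is doomed. Accordingly I would work under the stationary measure $\mu$ and aim for a zero-one law: using ergodicity of the environment-seen-from-the-walk (after passing to ergodic components, on each of which recurrence and hence the divergence above still hold), show that $\beta$ is $\mu$-a.e.\ constant and takes only the values $0$ and $1$; since its average is positive on a.e.\ ergodic component, it must equal $1$. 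Establishing this dichotomy is the crux. The mechanism I expect to need is that a positive-probability event of only finitely many collisions is incompatible with the reversible structure, made precise through the renewal description of the collision sequence (successive meeting vertices form a stationary chain, by the absolute-continuity observation above) together with the time-reversal symmetry of the bi-infinite walk. Balancing this renewal identity against the infinitude of $\E_\mu[N]$ to force $\beta\in\{0,1\}$ is the step requiring the most care, and is precisely where reversibility, rather than mere recurrence, does the essential work.
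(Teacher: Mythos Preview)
Your reversibility computation giving $\sum_{n\ge 1}\E_\mu\bigl[\sum_w p_n(\rho,w)^2\bigr]=\E_\mu\bigl[\sum_{n\ge 1}p_{2n}(\rho,\rho)\bigr]=\infty$ is correct and is the same identity the paper uses. The gap is exactly where you flag it: upgrading $\E_\mu[\beta]>0$ to $\beta=1$ $\mu$-a.s. The route you sketch through ergodicity and a zero-one law does not work as written. The function $\beta(G,\rho)$ is not shift-invariant (it genuinely depends on the root), so ergodicity of the environment process does not force it to be $\mu$-a.e.\ constant, let alone $\{0,1\}$-valued; and even the root-free event $\{\exists v:\beta(G,v)<1\}$, while shift-invariant, is not excluded by $\E_\mu[N]=\infty$ alone. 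The appeal to a ``renewal identity'' and ``time-reversal symmetry'' is too vague to close this, and your own example of the comb shows why a purely dynamical argument cannot.

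The paper supplies the one idea you are missing: decompose according to the \emph{last} collision. With $q_{\text{fin}}(u)$ the probability that two walks from $u$ collide only finitely often and $q_0(v)$ the probability that two walks from $v$ never collide after time $0$, conditioning on the last collision being at time $n$ at vertex $v$ gives
\[
q_{\text{fin}}(u)=\sum_v\sum_{n\ge 0}p_n(u,v)^2\,q_0(v).
\]
This is your expected-collision sum with each term weighted by $q_0$ at the meeting vertex. Running your reversibility computation on $p_n(\rho,v)^2 q_0(v)$ instead of $p_n(\rho,v)^2$ yields
\[
\E_\mu\bigl[q_{\text{fin}}(\rho)\bigr]=\E_\mu\Bigl[q_0(\rho)\sum_{n\ge 0}p_{2n}(\rho,\rho)\Bigr].
\]
The left side is at most $1$ while the inner sum is infinite a.s., so $q_0(\rho)=0$ $\mu$-a.s., and then the displayed identity forces $q_{\text{fin}}\equiv 0$. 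No zero-one law, ergodic decomposition, or renewal argument is needed, and your two reductions become superfluous.
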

The assumption of reversibility can be replaced either by the assumption that $(G,\rho)$ is \emph{stationary} or by the assumption that $(G,\rho)$ is \emph{unimodular} and the root $\rho$ has finite expected degree (see \cref{S:Definitions} for definitions of these terms).


\begin{corollary}\label{C:examples} Each of the following graphs has the infinite collision property almost surely. (This is by no means an exhaustive list.)
\vspace{0.5em}
\begin{enumerate}\itemsep.75em
\item The Uniform Infinite Planar Triangulation (UIPT) and Quadrangulation (UIPQ).
\item The Incipient Infinite Cluster (IIC) of Bernoulli bond percolation in $\Z^2$.
\item Every component of each of the Wired Uniform and Minimal Spanning Forests (WUSF and WMSF) of any Cayley graph.
\end{enumerate}
\end{corollary}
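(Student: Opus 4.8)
The plan is to deduce each case from \cref{T:Main} by exhibiting the graph in question as the underlying graph of a recurrent unimodular random rooted graph of finite expected degree; by the variant of \cref{T:Main} recorded just after its statement, this is enough. Since the infinite collision property is a property of the unrooted graph alone, I am free to choose whatever unimodular rooting is convenient for each model, and I then need only verify the recurrence and finite-expected-degree hypotheses separately in each family.

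For the UIPT and UIPQ the unimodularity is immediate: both are defined as Benjamini--Schramm (local weak) limits of uniformly random finite triangulations and quadrangulations, and every local weak limit of finite graphs is automatically unimodular. The root degree has exponential tails in each case, so the expected degree is finite. The one genuinely deep input is recurrence, which I would simply quote from Gurel-Gurevich and Nachmias (whose theorem applies to unimodular random planar graphs of finite expected degree). Granting that, \cref{T:Main} applies directly.

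For the IIC of critical bond percolation in $\Z^2$, recurrence is in fact free: the IIC is a subgraph of $\Z^2$, so by Rayleigh monotonicity its effective resistance to infinity is at least that of $\Z^2$, which is infinite, and its degrees are bounded by $4$ so the expected degree is trivially finite. The delicate point here is the invariance hypothesis, because Kesten's construction conditions on the origin lying on the cluster and thereby distinguishes it. I would verify (or cite) that the IIC, rooted at the origin, satisfies the mass-transport principle, i.e.\ is unimodular, using the standard description of the IIC measure as a limit of critical clusters; this is where I expect the real work to be.

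Finally, for the components of the WUSF and WMSF of a Cayley graph $\Gamma$, the marked graph $(\Gamma,\omega)$ with $\omega$ the sampled forest is invariant under the transitive unimodular group $\Gamma$ and hence, rooted at a fixed vertex, is a unimodular random rooted graph; taking the connected component of the root preserves unimodularity, and since the degrees are bounded by those of $\Gamma$ the expected degree is finite. Recurrence of the components is available: WUSF components are one-ended trees almost surely, and a one-ended tree is recurrent because it has a unique ray to infinity and, being a tree, no alternative routes, so its effective resistance to infinity is that of a single ray and is infinite; for the WMSF one appeals to the analogous structural results. Then \cref{T:Main} shows that the component of the root has the infinite collision property almost surely, and a union bound over the countably many vertices, using $\Gamma$-invariance to see that each vertex's component is exceptional with probability zero, upgrades this to every component. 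Across all three families the main obstacle is thus not \cref{T:Main} itself but the verification of recurrence and of the correct unimodular structure for each model---most notably the unimodularity of the IIC, whose root is singled out by the conditioning.
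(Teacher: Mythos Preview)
Your approach is exactly the paper's: deduce each case from \cref{T:Main} (in its unimodular form) by checking unimodularity, recurrence, and finite expected degree, citing the relevant literature for each. The one point you flagged as the real obstacle---unimodularity of the IIC despite the origin being distinguished by conditioning---is resolved in the paper by citing J\'arai's result that Kesten's IIC coincides with the local weak limit of the largest cluster in $[0,n]^2$ rooted at a \emph{uniformly} random vertex, which is automatically unimodular; so your instinct about where the work lies, and what kind of argument is needed, was correct.
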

The UIPT was introduced by Angel and Schramm \cite{UIPT1} and  the UIPQ was introduced by Krikun~ \cite{krikun2005local}. They are unimodular by construction and were shown to be recurrent by Nachmias and Gurel-Gurevich \cite{GN12}. The IIC is an infinite random subgraph of $\Z^2$ introduced by Kesten \cite{Kesten86}.  For each $n$, let $C_n$ be the largest cluster of a critical Bernoulli bond percolation on the box $[0,n]^2$ and let $\rho_n$ be a uniformly random vertex of $C_n$. J\'arai \cite{Jar03} showed that the IIC can be defined as the weak limit of the random rooted graphs $(C_n,\rho_n)$, and is therefore unimodular ~\cite[\S2]{AL07}. For background on the Wired Uniform and Minimal Spanning Forests, see Chapters 10 and 11 of \cite{LP:book} and Section 7 of \cite{AL07}.

 In \cref{S:Extensions} we provide extensions of \cref{T:Main} to networks and to the continuous-time random walk.

\begin{remark}  If $G$ is a non-bipartite graph with the infinite collision property, it is easy to see that two independent random walks started from \emph{any} two vertices of $G$ will collide infinitely often a.s. On the other hand, if $G$ is a bipartite graph with the infinite collision property, then two independent random walks on $G$ will collide infinitely often if and only if their starting points are at an even distance from each other.\end{remark}

\subsection{Definitions}\label{S:Definitions}
In this section we give concise definitions of stationary, reversible and unimodular random rooted graphs. We refer the reader to Aldous and Lyons \cite{AL07} for more details.

A \textbf{rooted graph} $(G,\rho)$ is a connected, locally finite (multi)graph $G=(V,E)$ together with a distinguished vertex $\rho$, the \textbf{root}. An isomorphism of graphs $\phi: G \rightarrow G'$ is an isomorphism of rooted graphs $\phi:(G,\rho)\to(G',\rho')$ if $\phi(\rho)=\rho'$. The set of isomorphism classes of rooted graphs is endowed with the \textbf{local topology} ~\cite{BeSc}, in which, roughly speaking, two (isomorphism classes of) rooted graphs are close to each other if and only if they have large isomorphic balls around the root. A \textbf{random rooted graph} is a random variable taking values in the space of isomorphism classes of rooted graphs endowed with the local topology. 
Similarly, a \textbf{doubly-rooted graph} is a graph together with an ordered pair of distinguished (not necessarily distinct) vertices. Denote the space of isomorphism classes of doubly-rooted graphs equipped with this topology by $\mathcal{G}_{\bullet\bullet}$.

Recall that the \textbf{simple random walk} on a locally finite (multi)graph $G=(V,E)$ is the Markov process $\langle X_n\rangle_{n\geq0}$ on the state space $V$ with transition probabilities $p(u,v)$ defined to be the fraction of edges emanating from $u$ that end in $v$. 
A random rooted graph $(G,\rho)$ is said to be \textbf{stationary} if, when $\langle X_n \rangle_{n\geq 0}$ is a simple random walk on $G$ started at the root, \[(G,\rho) \eqd (G,X_n)\]
for all $n$ and is said to be \textbf{reversible} if 
\[ (G,\rho,X_n) \eqd (G,X_n,\rho)\]
for all $n$. Every reversible random rooted graph is clearly stationary, but the converse need not hold in general \cite[Examples 3.1 and 3.2]{BLPS99}. However, Benjamini and Curien \cite[Theorem 4.3]{BC2011} showed that every recurrent stationary random rooted graph is necessarily reversible, so that \cref{T:Main} also applies under the apparently weaker assumption of stationarity.

Reversibility is closely related to the property of unimodularity.  A \textbf{mass transport} is a function $f:\mathcal{G}_{\bullet\bullet}\to[0,\infty]$. A random rooted graph $(G,\rho)$ is said to be \textbf{unimodular} if it satisfies the \textbf{Mass-Transport Principle}: for every mass transport $f$,
\begin{equation*}\label{eq:MTP}\tag{MTP} \E\left[\sum_v f(G,\rho,v)\right] = \E\left[\sum_uf(G,u,\rho)\right]. \end{equation*} 
That is, 
\[ \textit{Expected mass out equals expected mass in.} \]
The Mass-Transport Principle was first introduced by H\"aggstr\"om \cite{Hagg97} to study dependent percolation on Cayley graphs. The current formulation of the Mass-Transport Principle was suggested by Benjamini and Schramm \cite{BeSc} and developed systematically in \cite{AL07}.


As noted in \cite{BC2011}, if $(G,\rho)$ is a unimodular random rooted graph with $\E[\deg(\rho)]<\infty$, then biasing the law of $(G,\rho)$ by $\deg(\rho)$ 
gives an equivalent law of a reversible random rooted graph. Conversely, if $(G,\rho)$ is a reversible random rooted graph, then biasing the law of $(G,\rho)$ by $\deg(\rho)^{-1}$ gives an equivalent law of a reversible random rooted graph. 
For example, if $(G,\rho)$ is a finite random rooted graph then it is unimodular if and only if $\rho$ is uniformly distributed on $G$, and is reversible if and only if $\rho$ is distributed according to the stationary measure of simple random walk on $G$. 

In light of the above correspondence, \cref{T:Main} may be stated equivalently as follows.
\begin{theorem}\label{T:Mainunimod} Let $(G,\rho)$ be a recurrent unimodular random rooted graph with $\E[\deg(\rho)]<\infty$. Then $G$ has the infinite collision property almost surely.
\end{theorem}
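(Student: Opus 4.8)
The plan is to reduce the almost-sure infinitude of collisions to a positive-probability statement via a zero--one law, and to establish that positive probability by a first- and second-moment (Paley--Zygmund) argument in which reversibility does all the work. Throughout, write $\langle X_n\rangle$ and $\langle Y_n\rangle$ for the two independent walks from $\rho$, let $Z_N=\sum_{n=0}^N \mathbf{1}[X_n=Y_n]$ count collisions up to time $N$, and set $N_\infty=Z_\infty$. Passing to the ergodic decomposition, I may assume the law of $(G,\rho)$ is ergodic; each component is again reversible and recurrent, so nothing is lost.

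First I would compute the expected number of collisions, where reversibility enters decisively. Reversibility lets me realise $\langle Y_n\rangle$ as the past of a two-sided walk: there is a time-shift-stationary walk $\langle W_n\rangle_{n\in\Z}$ with $W_n=X_n$ and $W_{-n}=Y_n$ for $n\ge 0$, this stationarity being exactly the content of the identity $(G,\rho,X_n)\eqd(G,X_n,\rho)$. A collision at time $n$ is the event $\{W_n=W_{-n}\}$, and shifting time by $n$ gives $\mathbb{P}(W_n=W_{-n})=\mathbb{P}(W_{2n}=W_0)=\E[p^G_{2n}(\rho,\rho)]$. Summing,
\[
\E[Z_N]=\sum_{n=0}^N \E\!\left[p^G_{2n}(\rho,\rho)\right]\xrightarrow[N\to\infty]{}\E\!\left[\textstyle\sum_{n\ge0}p^G_{2n}(\rho,\rho)\right]=\infty,
\]
since $G$ is almost surely recurrent. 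Thus the expected number of collisions equals the expected number of even-time returns of a single walk to the root --- the clean identity that fails for non-reversible recurrent graphs such as the comb.

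Next I would run the second-moment method. Writing $r_j(v)=\sum_w p^G_j(v,w)^2$ for the chance that two walks from $v$ recollide after $j$ steps, the strong Markov property yields
\[
\E[Z_N^2]=\sum_{0\le n\le k\le N}\!\!\big(2-\mathbf{1}[n=k]\big)\,\E\!\left[\textstyle\sum_v p^G_n(\rho,v)^2\, r_{k-n}(v)\right],
\]
and the goal is $\E[Z_N^2]\le C\,\E[Z_N]^2$ with $C$ independent of $N$. Here I would invoke the Mass-Transport Principle (equivalently, reversibility) to move the root onto the collision vertex $v$ and decouple $r_{k-n}(v)$ from the weight $p^G_n(\rho,v)^2$; the hypothesis $\E[\deg\rho]<\infty$ is what controls the degree factors arising from rewriting $p_n(\rho,v)=(\deg v/\deg\rho)\,p_n(v,\rho)$. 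Granting this bound, Paley--Zygmund gives $\mathbb{P}(Z_N\ge\tfrac12\E[Z_N])\ge 1/(4C)$ for every $N$, and since $\E[Z_N]\to\infty$, the reverse Fatou inequality forces $\mathbb{P}(N_\infty=\infty)\ge 1/(4C)>0$.

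Finally I would upgrade positive probability to almost sure. For fixed $G$ the event $\{N_\infty=\infty\}$ is a tail event of the walk increments, so Kolmogorov's zero--one law gives $q(G,v):=\mathbb{P}_{G,v}(N_\infty=\infty)\in\{0,1\}$; a short harmonicity/invariance argument shows $q(G,\cdot)$ is constant on the (connected, recurrent) component, so $q=q(G)$ is a re-rooting-invariant function and hence almost surely a constant $c\in\{0,1\}$ by ergodicity. Since $\mathbb{P}(N_\infty=\infty)=\mathbb{P}(q(G)=1)$ is both positive and valued in $\{0,1\}$, we get $c=1$, so two walks from $\rho$ collide infinitely often almost surely; constancy of $q(G,\cdot)$ over the component then delivers the infinite collision property simultaneously from every vertex. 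The main obstacle is the second-moment estimate: unlike the first moment, it is genuinely sensitive to the clustering of collisions --- this is precisely what goes wrong on the comb --- and the crux is to show that reversibility together with finite expected degree prevents the collision locations from concentrating in atypically return-heavy regions.
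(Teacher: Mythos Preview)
Your first-moment identity $\E[Z_N]=\sum_{n\le N}\E[p_{2n}(\rho,\rho)]$ is correct and is, in spirit, precisely the reversibility computation the paper exploits. The gap is the second-moment bound: you write ``Granting this bound'' and later call it ``the main obstacle'' without establishing it, and I do not see how to close it along the lines you indicate. Carrying your own reversibility trick one step further gives
\[
\E[Z_N^2]\;\le\;2\sum_{n+j\le N}\E\!\big[p_{2n}(\rho,\rho)\,r_j(\rho)\big],
\]
and comparing this with $\E[Z_N]^2=\big(\sum_{n\le N}\E[p_{2n}(\rho,\rho)]\big)^2$ would require decorrelating the random variables $p_{2n}(\rho,\rho)$ and $r_j(\rho)$. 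Passing to the ergodic decomposition does not help: both are functions of the \emph{same} random rooted graph, and in an ergodic unimodular graph whose local return behaviour near $\rho$ fluctuates heavily there is no reason for $\E\big[(\sum_{n\le N}p_{2n}(\rho,\rho))^2\big]$ to be controlled by $(\sum_{n\le N}\E[p_{2n}(\rho,\rho)])^2$. The Mass-Transport Principle moves the root; it does not bound variances of root-dependent quantities, so it does not obviously produce the uniform constant $C$ you need.

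The paper sidesteps the second moment entirely by decomposing according to the \emph{location of the last collision}. Writing $q_0(v)$ for the probability that two walks from $v$ never collide after time $0$ and $q_{\text{last}}(u,v)=\sum_{n\ge0}p_n(u,v)^2q_0(v)$, one has $q_{\text{fin}}(u)=\sum_v q_{\text{last}}(u,v)$. Transport mass $\deg(u)\,q_{\text{last}}(u,v)$ from $u$ to $v$: the expected mass sent is $\E[\deg(\rho)q_{\text{fin}}(\rho)]\le\E[\deg(\rho)]<\infty$, while the mass received at $v$ equals $\deg(v)\,q_0(v)\sum_{n\ge0}p_{2n}(v,v)$, by exactly your first-moment identity $\sum_u\deg(u)p_n(u,v)^2=\deg(v)p_{2n}(v,v)$. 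Recurrence makes the inner sum infinite, so the MTP forces $q_0(\rho)=0$ a.s., and then $q_{\text{fin}}\equiv0$ follows from the decomposition. The point is that folding the factor $q_0$ into the transport turns a single first-moment identity into the whole proof, eliminating any need for Paley--Zygmund, ergodic decomposition, or the tail zero--one law.
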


 We provide two variations on the proof of \cref{T:Main}. The first uses the Mass-Transport Principle. The second, given in \cref{S:Extensions}, uses reversibility, and applies in the network setting also.

\section{Proof of \cref{T:Main}}
\begin{proof}

Let $G$ be a graph and let $p_n(\hspace{.1em}\cdot\hspace{.225em},\hspace{.05em}\cdot\hspace{.15em})$ denote the $n$-step transition probabilities for simple random walk on $G$. For each vertex $u$ of $G$, let $q_\text{fin}(u)$ denote the probability that two independent random walks started at $u$ collide only finitely often, and let $q_0(u)$ denote the probability that two independent random walks started at $u$ do not collide at all after time zero. Finally, for each pair of vertices $u$ and $v$ let $q_\text{last}(u,v)$ be the probability that two independent random walks started at $u$ collide for the last time at $v$, so that
\[ q_\text{fin}(u)=\sum_vq_\text{last}(u,v).\]
Decomposing according to the time of the last collision gives
\begin{equation}\label{eq:lastcollision1} q_\text{last}(u,v) = \sum_{n\geq0} p_n(u,v)^2q_0(v)\end{equation}
and hence
\begin{equation}\label{eq:lastcollision} q_\text{fin}(u) = \sum_v\sum_{n\geq0} p_n(u,v)^2q_0(v).\end{equation}


%

Suppose that $(G,\rho)$ is a recurrent unimodular random rooted graph with $\E[\deg(\rho)]<\infty$.
Consider the mass transport \[f(G,u,v) = \deg(u)q_\text{last}(u,v).\] 
Each vertex $u$ sends a total mass of $\deg(u)q_\text{fin}(u)$, while, by \eqref{eq:lastcollision1}, each vertex $v$ receives a total mass of
\begin{align*} \sum_uf(G,u,v) &= \sum_{n\geq0}\sum_{u}\deg(u)p_n(u,v)^{2}q_0(v)\\
&= q_0(v)\sum_{n\geq0}\sum_u\deg(v)p_n(v,u)p_n(u,v)\\
&= q_0(v)\deg(v)\sum_{n\geq0}p_{2n}(v,v).\end{align*}
Since $G$ is recurrent, the sum $\sum_{n\geq0}p_{2n}(v,v)$ is infinite a.s.~ for every vertex $v$ of $G$. By the Mass-Transport Principle,
\[\E[\deg(\rho)q_\text{fin}(\rho)] = \E\left[q_0(\rho)\deg(\rho)\sum_{n\geq0}p_{2n}(\rho,\rho)\right]. \]
Since the left-hand expectation is finite by assumption, we must have that $q_0(\rho)=0$ a.s., and consequently that $q_\text{fin}(v)=0$ for every vertex $v$ in $G$ a.s.
\end{proof}

\section{Extensions}\label{S:Extensions}
\subsection{Networks}
Recall that a \textbf{network} $(G,c)$ is a connected locally finite graph $G=(V,E)$ together with a function $c:E\to(0,\infty)$ assigning to each edge $e$ of $G$ a positive \textbf{conductance} $c(e)$. Graphs may be considered to be networks by setting $c(e) \equiv1$. Write $c(u)$ for the sum of the conductances of the edges emanating from $u$ and $c(u,v)$ for the sum of the conductances of the edges joining $u$ and $v$. The random walk $\langle X_n \rangle_{n\geq 0}$ on a network $(G,c)$ is the Markov chain on $V$ with transition probabilities $p(u,v) = c(u,v)/c(u)$.  Unimodular and reversible  random rooted networks are defined similarly to the unweighted case ~\cite{AL07}. In particular, a random rooted network $(G,c,\rho)$ is defined to be reversible if, letting $\langle X_n \rangle_{n\geq 0}$ be a random walk on $(G,c)$ started from $\rho$,
\[(G,c,\rho,X_n) \eqd (G,c,X_n,\rho) \text{ for all $n\geq1$.}\]


We now extend \cref{T:Main} to the setting of reversible random rooted networks. The proof given also yields an alternative proof of \cref{T:Main}.
\begin{theorem}\label{T:Network} Let $(G,c,\rho)$ be a recurrent reversible random rooted network. Then $(G,c)$ has the infinite collision property almost surely.
\end{theorem}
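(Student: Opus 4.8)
The plan is to run the same last-collision decomposition as in the proof of \cref{T:Main}, but to replace the single application of the Mass-Transport Principle by a time-indexed family of identities coming directly from reversibility of the random rooted network; this has the pleasant feature of automatically accounting for the conductance weighting of the network random walk, so that no explicit $\deg$ or $c$ factor needs to be inserted by hand. As before, write $p_n$ for the $n$-step transition probabilities of the network walk on $(G,c)$ and let $q_0$, $q_{\text{fin}}$, $q_{\text{last}}$ be defined exactly as in that proof; the definitions and the identities \eqref{eq:lastcollision1} and \eqref{eq:lastcollision} make sense verbatim for the network walk, since the Markov property and Chapman--Kolmogorov are unaffected by the conductances. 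The point to record first is that each of $p_n(a,b)$ and $q_0(b)$ is an isomorphism-invariant function of the doubly-rooted network $(G,c,a,b)$, so that for each fixed $n$ the product $F(G,c,a,b) := p_n(a,b)\,q_0(b)$ is a legitimate nonnegative function on $\mathcal{G}_{\bullet\bullet}$ to which reversibility may be applied.

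The key step is to apply reversibility, $(G,c,\rho,X_n)\eqd(G,c,X_n,\rho)$, to this $F$ and then average over the walk. First I would take the conditional expectation of $F(G,c,\rho,X_n)$ over the walk given the network: since $X_n$ has law $p_n(\rho,\cdot)$, this produces $\sum_v p_n(\rho,v)^2 q_0(v)$. Doing the same for $F(G,c,X_n,\rho)=p_n(X_n,\rho)\,q_0(\rho)$ produces $q_0(\rho)\sum_v p_n(\rho,v)p_n(v,\rho)=q_0(\rho)\,p_{2n}(\rho,\rho)$ by Chapman--Kolmogorov. Reversibility therefore yields
\[ \E\!\left[\sum_v p_n(\rho,v)^2 q_0(v)\right] = \E\!\left[q_0(\rho)\,p_{2n}(\rho,\rho)\right] \qquad\text{for every } n\ge 0. \]
Summing over $n\ge 0$ and using \eqref{eq:lastcollision} on the left turns this into $\E[q_{\text{fin}}(\rho)] = \E\big[q_0(\rho)\sum_{n\ge0}p_{2n}(\rho,\rho)\big]$. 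Since $q_{\text{fin}}(\rho)\le 1$ the left-hand side is finite, whereas recurrence of $(G,c)$ forces $\sum_{n\ge0}p_{2n}(\rho,\rho)=\infty$ almost surely; hence the right-hand side can be finite only if $q_0(\rho)=0$ almost surely.

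Finally I would upgrade ``$q_0(\rho)=0$ a.s.'' to ``$q_0(v)=0$ for every vertex $v$, a.s.'', which gives $q_{\text{fin}}(v)=\sum_w\sum_n p_n(v,w)^2 q_0(w)=0$ for all $v$ and hence the infinite collision property. Here I would use that reversibility implies stationarity, so $(G,c,X_n)\eqd(G,c,\rho)$ and thus $q_0(X_n)=0$ a.s.\ for each $n$; intersecting over the countably many times $n$ and invoking recurrence, which makes the walk visit every vertex of the connected network almost surely, shows that $q_0$ vanishes at every vertex. The main thing to get right is the middle step: one must verify carefully that the per-time reversibility swap of $\rho$ and $X_n$, followed by averaging over the walk, genuinely delivers the return probability $p_{2n}(\rho,\rho)$ on one side and the squared-transition sum on the other, and that $F$ is an honest function of the doubly-rooted isomorphism class so that reversibility legitimately applies to it. Everything else is a routine repetition of the computation in the proof of \cref{T:Main}.
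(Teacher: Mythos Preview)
Your proposal is correct and is essentially the paper's own proof of \cref{T:Network}: both arguments take expectations in \eqref{eq:lastcollision}, apply the reversibility identity $(G,c,\rho,X_n)\eqd(G,c,X_n,\rho)$ to the functional $p_n(\cdot,\cdot)q_0(\cdot)$ for each $n$, use Chapman--Kolmogorov to produce $p_{2n}(\rho,\rho)$, and conclude from recurrence that $q_0(\rho)=0$ a.s. Your write-up is somewhat more explicit about the measurability of $F$ and about upgrading $q_0(\rho)=0$ to $q_0\equiv 0$ on all of $G$, but the underlying argument is identical.
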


\begin{proof}
Let $q_\text{fin}$ and $q_0$ be defined as in the proof of \cref{T:Main}.
Taking expectations on both sides of \eqref{eq:lastcollision} with $u=\rho$,
\[ \E\left[q_\text{fin}(\rho)\right]=\E\left[\sum_v\sum_{n\geq 0}p_n(\rho,v)^2q_0(v)\right]=\sum_{n\geq 0}\E\big[p_n(\rho,X_n)q_0(X_n)\big].\]
Applying reversibility, 
\[ \E[q_\text{fin}(\rho)]=\sum_{n\geq 0}\E\big[p_n(X_n,\rho)q_0(\rho)\big] = \E\left[q_0(\rho)\sum_{n\geq0}p_{2n}(\rho,\rho)\right]. \]
Since $G$ is recurrent, $\sum_{n\geq0}p_{2n}(v,v)=\infty$ a.s.~ for every vertex $v$ of $G$. Thus, since the left-hand expectation is finite, we must have that $q_0(\rho)=0$ a.s.~ and hence also that $q_\text{fin}(\rho)=0$ a.s.
\end{proof}


\subsection{Continuous time random walk}
Let $(G,c,\rho)$ be a recurrent unimodular random rooted network, and let $\langle X_t \rangle_{t\geq0}$ denote the continuous-time random walk started from $\rho$, which jumps across each edge $e$ with rate $c(e)$ (see e.g.~ \cite{NorrisBook}). By \cite[Corollary 4.3]{AL07}, $(G,c,\rho)$ is reversible for $\langle X_t \rangle_{t\geq0}$ in the sense that \[(G,\rho,X_t)\eqd(G,X_t,\rho)\] for all $t\geq 0$. (The continuous-time walk on a recurrent graph a.s.~ does not explode). 

The natural analogue of the infinite collision property also holds a.s.~ for the continuous-time random walk on recurrent unimodular random rooted networks.
\begin{theorem}\label{T:continuoustime} let $(G,c,\rho)$ be a recurrent unimodular random rooted network and let $\langle X_t\rangle_{t\geq 0}$ and $\langle Y_t \rangle_{t \geq 0}$ be two independent continuous-time random walks on $(G,c)$ started from any two vertices. Then the set of times $\{t: X_t = Y_t\}$ has infinite Lebesgue measure almost surely. 
\end{theorem}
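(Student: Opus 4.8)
The plan is to mirror the proof of \cref{T:Network} in continuous time, replacing the last-collision decomposition by a \emph{last meeting interval} decomposition. Write $p_t(u,v)$ for the continuous-time transition probabilities on $(G,c)$ and start both walks at the root $\rho$. Set $M=\{t\ge 0: X_t=Y_t\}$; since the walk a.s.\ does not explode, $M$ is a union of intervals $[\alpha_i,\beta_i)$, locally finite in time, on each of which $X$ and $Y$ sit together at a common vertex. For a vertex $v$ let $q_0(v)$ be the probability that two independent walks started together at $v$ never again occupy a common vertex after their initial joint sojourn at $v$ (the continuous-time analogue of the $q_0$ of \cref{T:Main}). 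Using the strong Markov property together with the memorylessness of the holding times, I would first establish
\begin{equation*} \E\!\left[c(v_{\mathrm{last}})\,\mathrm{Leb}(I_{\mathrm{last}})\right] = \int_0^\infty\sum_v c(v)\,p_t(\rho,v)^2\,q_0(v)\,dt, \end{equation*}
where $I_{\mathrm{last}}$ is the final meeting interval (when it exists) and $v_{\mathrm{last}}$ the common vertex on it. The point of the conductance weight $c(v)$ is that, given a meeting at $v$, the joint sojourn has length $\mathrm{Exp}(2c(v))$ independently of whether it is the last one, so the left-hand side equals $\tfrac12\,\Pr(\text{a last meeting interval exists})\le\tfrac12$; this keeps the controlled quantity bounded even when conductances are unbounded.

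Next I would feed this identity into the reversibility computation exactly as in \cref{T:Network}. Writing $\sum_v c(v)p_t(\rho,v)^2 q_0(v)=\E[c(X_t)\,p_t(\rho,X_t)\,q_0(X_t)\mid G,c,\rho]$ and applying the continuous-time reversibility $(G,c,\rho,X_t)\eqd(G,c,X_t,\rho)$ to the mass transport $F((G,c),a,b)=c(b)p_t(a,b)q_0(b)$, the conductance factor rides along and becomes $c(\rho)$, yielding
\begin{equation*} \E\!\left[c(v_{\mathrm{last}})\,\mathrm{Leb}(I_{\mathrm{last}})\right] = \E\!\left[c(\rho)\,q_0(\rho)\int_0^\infty p_{2t}(\rho,\rho)\,dt\right]. \end{equation*}
Since $G$ is recurrent, $\int_0^\infty p_{2t}(\rho,\rho)\,dt=\infty$ a.s., and $c(\rho)>0$; as the left-hand side is at most $\tfrac12$, we must have $q_0(\rho)=0$ a.s. By unimodularity this propagates to $q_0(v)=0$ for every vertex a.s., so from any meeting another meeting a.s.\ follows: the two walks meet infinitely often and $M$ is a.s.\ unbounded. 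The extension to walks from two arbitrary vertices then follows as in the discrete case, there being no parity obstruction in continuous time.

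The main obstacle is the final passage from \emph{infinitely many meeting intervals} to \emph{infinite Lebesgue measure}, which is exactly where continuous time departs from the discrete setting. Writing $W_1,W_2,\dots$ for the successive meeting vertices, the $k$-th interval has length $\ell_k\sim\mathrm{Exp}(2c(W_k))$, so $\mathrm{Leb}(M)=\sum_k\ell_k$, and a conditional three-series argument reduces the claim to showing $\sum_k \min\{1,c(W_k)^{-1}\}=\infty$ a.s. This could fail a priori only if the meeting locations escaped to vertices of ever larger conductance with summable reciprocals. To rule this out I would argue that the sequence of rooted networks $(G,c,W_k)$ seen at successive meetings is stationary --- which I expect to follow from the mass-transport principle applied to the meeting events, in the spirit of a Palm calculation --- so that $c(W_k)$ is a stationary sequence of positive numbers and hence $\sum_k\min\{1,c(W_k)^{-1}\}=\infty$ a.s.\ by Birkhoff's ergodic theorem. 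Making this stationarity precise is the step I expect to require the most care.
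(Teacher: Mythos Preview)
Your approach diverges sharply from the paper's. The first half---the $c$-weighted last-meeting-interval identity giving $q_0(\rho)=0$ a.s.---is correct and the conductance weight is a neat device for keeping the left side bounded without any moment assumption on $c(\rho)$.

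The gap is exactly where you locate it, and it is real. The stationarity you want does not hold in the form stated. Conditionally on the network, the meeting-vertex chain $(W_k)$ is the chain induced on the diagonal by the jump chain of the pair $(X_t,Y_t)$; since the pair's jump chain has stationary measure proportional to $c(u)+c(w)$, the induced chain on the diagonal has stationary measure proportional to $c(v)$, not counting measure. Hence the rooted-network process $(G,c,W_k)$ started from the unimodular root is \emph{not} stationary---its invariant law is the $c$-biased version of the unimodular law. You could try to pass to that biased law and invoke the ergodic theorem there, but then the limit $\E_{\text{biased}}[\min\{1,c(\rho)^{-1}\}]=\E[\min\{c(\rho),1\}]/\E[c(\rho)]$ is only guaranteed positive when $\E[c(\rho)]<\infty$, an assumption the theorem does not make. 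So the proposed route, as it stands, either proves a weaker statement or needs a genuinely new idea.

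The paper sidesteps all of this. It does not redo the argument in continuous time: it samples the walks at integer times and observes that the proof of \cref{T:Network} applies verbatim to the reversible chain $\langle X_n\rangle$, giving infinitely many collisions in $\mathbb N$ a.s. For each offset $s\in[0,1]$, the law of $\langle(X_{n+s},Y_{n+s})\rangle_{n\ge1}$ is absolutely continuous with respect to that of $\langle(X_n,Y_n)\rangle_{n\ge1}$ (condition on neither walk jumping before time $s$), so there are infinitely many collisions in $s+\mathbb N$ a.s.\ as well; Fubini over $s\in[0,1]$ then gives
\[
\mathcal L\textit{eb}\{t:X_t=Y_t\}=\int_0^1 \big|\{n:X_{n+s}=Y_{n+s}\}\big|\,ds=\infty\quad\text{a.s.}
\]
This bypasses the meeting-interval bookkeeping and any Palm-type considerations entirely, and needs no hypothesis on $\E[c(\rho)]$.
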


\begin{proof} Let $\langle X_t \rangle_{t\geq0}$ and $\langle Y_t\rangle_{t\geq0}$ be independent continuous-time random walks starting at the same vertex $u$ of $G$. By considering the walks only at integer times $t=n$, the proof of \cref{T:Network} readily shows that the set of integer collision times $\{n \in \mathbb{N}: X_n=Y_n\}$ is infinite a.s. 


For every $s\geq 0$ there is a positive probability that neither $\langle X_t\rangle_{t\geq0}$ nor $\langle Y_t \rangle_{t\geq0}$ has made any jumps by time $s$. Thus, the law of the sequence $\langle (X_{n+s},Y_{n+s}) \rangle_{n \geq 1}$ is absolutely continuous with respect to the law of $\langle (X_{n}, Y_{n}) \rangle_{n \geq 1}$.
It follows that for every $s\geq0$, the walks $\langle X_t \rangle_{t\geq0}$ and $\langle Y_t\rangle_{t\geq0}$ collide at an infinite set of times of the form $\{n+s : n \in \mathbb{N}\}$ a.s., and consequently that 
\[ \mathcal{L}\hspace{-.055em}\textit{eb}\big(\{t: X_t = Y_t\}\big)=\int_0^1\!\big|\{n : X_{n+s}=Y_{n+s}\}\big|\, \mathrm{d}s =\infty \quad \text{a.s.}\]

For every other vertex $v$, there is a positive probability that $X_1=u$ and $Y_1=v$, so that the law of two independent continuous-time random walks started from $u$ and $v$ is absolutely continuous with respect to the law of $\langle (X_{t+1}, Y_{t+1}) \rangle_{t\geq0}$. Thus, the set of collision times of two independent continuous-time random walks started from $u$ and $v$ has infinite Lebesgue measure a.s.
\end{proof}

\begin{remark}
\cref{T:continuoustime} has consequences for the \emph{voter model} on unimodular random recurrent networks. For every network $(G,c)$ in which two independent continuous-time random walks collide a.s., duality between the voter model and continuous-time coalescing random walk (\cite[\S5]{Ligg05} and \cite[\S14]{aldous-fill-2014}) implies that   the only ergodic stationary measures for the voter model on $(G,c)$ are the constant (a.k.a.~ consensus) measures. Thus, a consequence of \cref{T:continuoustime} is that this holds for the voter model on recurrent unimodular random rooted networks. 
\end{remark}
\subsection*{Acknowledgements}
This work was carried out while TH was an intern at Microsoft Research. We thank Itai Benjamini for suggesting this problem, and also thank Lewis Bowen, Perla Sousi and Omer Tamuz for helpful discussions.
\bibliographystyle{abbrv}
 \bibliography{Collisions}
\end{document}